\newcommand{\dd}{\mathrm{d}}
\newcommand{\E}{\mathbb{E}}
\newcommand{\1}{\textbf{1}}
\newcommand{\R}{\mathbb{R}}
\newtheorem{theorem}{Theorem}
\newtheorem{lemma}[theorem]{Lemma}
\theoremstyle{remark}
\newtheorem{remark}[theorem]{Remark}
\theoremstyle{definition}
\def\s{\sigma}
\newcommand{\set}[1]{\left\{#1\right\}}
\renewcommand{\Pr}[1]{{\bf P}(#1)}
\begin{document}
\title{A note on log-concave random graphs}
\author{Alan Frieze\thanks{Research supported in part by NSF Grant DMS1661063} and Tomasz Tkocz\\
Department of Mathematical Sciences,\\
Carnegie Mellon University,\\
Pittsburgh PA15213,\\
U.S.A.}
\maketitle
\begin{abstract}
We establish a threshold for the connectivity of certain random graphs whose (dependent) edges are determined by the uniform distributions on generalized Orlicz balls, crucially using their negative correlation properties. We also show the existence of a unique giant component for such random graphs.
\end{abstract}
\section{Introduction}
Probabilistic combinatorics is today a thriving field bridging the classical area of probability with modern developments in combinatorics. The theory of random graphs, pioneered by Erd\H{o}s-R\'enyi \cite{ErdosRenyi59}, \cite{ErdosRenyi60} has given us numerous insights, surprises and techniques and has been used to count, to establish structural properties and to analyze algorithms. There are by now several texts \cite{B}, \cite{JLR}, \cite{FK} that deal exclusively with the subject. The most heavily studied models being $G_{n,m}$ and $G_{n,p}$. Both have vertex set $[n]$ and in the first we choose $m$ random edges and in the second we include each possible edge independently with probability $p$. 

Let $X$ be a random vector in $[0,\infty)^{\binom{n}{2}}$ with a log-concave down-monotone density $f$, that is (i) $\log f$ is concave and (ii) $f(x) \geq f(y)$ if $x \leq y$ (coordinate-wise). For $0 < p < 1$, let $G_{X,p}$ be a random graph with vertices $1,\ldots, n$ and edges determined by $X$: for $1 \leq i < j \leq n$, $\{i,j\}$ is an edge if and only if $X_{\{i,j\}} \leq p$. Such \emph{log-concave random graphs} were introduced by Frieze, Vempala and Vera in \cite{FVV}. For instance, when $X$ is uniform on $[0,1]^{\binom{n}{2}}$, $G_{X,p}$ is the  random graph $G_{n,p}$. 
The paper \cite{FVV} introduced a surprising connection between random graphs and convex geometry.

It studied, among other things, the connectivity of $G_{X,p}$ and found a logarithmic gap for the threshold. There is no gap when $G_{X,p}$ is defined by uniform sampling from a ``well-behaved'' regular simplex\footnote{A regular simplex $\set{x\in \R^d:a\cdot x\leq 1}$ for some $a\geq 0$ if $a_i/a_j\leq K$ for some not too large $K$.} and we extend this case to {\em Generalized Orlicz Balls} GOBs: that is sets of the form $\{x \in \R^d: \sum_{i=1}^d f_i(|x_i|) \leq 1\}$ for some nondecreasing lower semicontinuous convex functions $f_1,\ldots,f_d: [0,\infty)\to [0,\infty]$ with $f_i(0) = 0$, which are not identically $0$ or $+\infty$ on $(0,\infty)$. 

The key property of Orlicz balls is {\em negative correlation}. We say that a random vector $X$ in $\R^d$ has negatively correlated coordinates if for any disjoint subsets $I$, $J$ of $\{1,\ldots,d\}$ and nonnegative numbers $s_i$, $t_j$, we have 
\[
\Pr{\forall i \in I \ |X_i| > s_i, \forall j \in J \ |X_j| > t_j} \leq \Pr{\forall i \in I \ |X_i| > s_i}\Pr{\forall j \in J \ |X_j| > t_j}.
\]
It was shown in \cite{PW2} that this property holds for random vectors uniformly distributed on GOBs (see also \cite{PW1} for a first such result treating two coordinates and \cite{Woj} for a simpler proof of the general result).

{\bf Notation:} Throughout the paper we will let $\s_{\min}$ and $\s_{\max}$ be defined by  
$$\sigma_{\min}^2 = \min_{1 \leq i < j \leq n}\E X_{i,j}^2 \qquad \text{and} \qquad \sigma_{\max}^2 = \max_{1 \leq i < j \leq n}\E X_{i,j}^2.$$
Our result concerning connectivity is the following theorem.
\begin{theorem}\label{th1}
Let $X = (X_{i,j})_{1\leq i < j \leq n}$ be a log-concave random vector in $[0,\infty)^{\binom{n}{2}}$ with a down-monotone density and negatively correlated coordinates. 
\begin{enumerate}[(a)]
\item For every $\delta \in (0,1)$, there are constants $c_1$ and $c_2$ dependent only on $\delta$ such that for $p < c_1\sigma_{\min}\frac{\log n}{n}$, we have
\[
\Pr{G_{X,p} \textrm{ has isolated vertices}} > 1 - c_2n^{-\delta}.
\]
\item For every $\delta \in (0,1)$, there are constants $C_1$ and $C_2$ dependent only on $\delta$ such that for $p > C_1\sigma_{\max}\frac{\log n}{n}$, we have
\[
\Pr{G_{X,p} \textrm{ is connected }} > 1 - C_2n^{-\delta}.
\]
\end{enumerate}
\end{theorem}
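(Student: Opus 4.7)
The plan is to treat the two parts separately. Part (b) is a standard union bound over cuts using the negative correlation hypothesis. Concretely, if $G_{X,p}$ is disconnected then some nonempty $S \subsetneq [n]$ with $|S|\leq n/2$ has no crossing edges, i.e.\ $X_{i,j}>p$ for all $i\in S, j\notin S$. Since these are upper-tail events indexed by a disjoint family, negative correlation bounds this probability by $\prod_{i\in S,j\notin S}\Pr{X_{i,j}>p}\leq (1-cp/\sigma_{\max})^{|S|(n-|S|)}$, where the factor bound uses the standard estimate $\Pr{X_{i,j}\leq p}\geq cp/\sigma_{i,j}$ for log-concave down-monotone $1$D densities in the relevant range of $p$. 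Summing $\binom{n}{k}$ such terms over $1\leq k\leq n/2$, the hypothesis $p>C_1\sigma_{\max}\log n/n$ with $C_1=C_1(\delta)$ sufficiently large makes the total $O(n^{-\delta})$.

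For part (a), let $Z_0$ count the isolated vertices; I apply Chebyshev's inequality $\Pr{Z_0=0}\leq \Var(Z_0)/(\E Z_0)^2$. For the first moment, the marginal of $(X_{v,j})_{j\neq v}$ is log-concave and down-monotone, so by Pr\'ekopa the joint survival $F(s)=\Pr{X_{v,j}>s_j,\ j\neq v}$ is log-concave on $[0,\infty)^{n-1}$. Writing $(p,\ldots,p)=\frac{1}{n-1}\sum_j(n-1)p\,e_j$ as a convex combination and using log-concavity,
\[
\log \Pr{v\text{ iso}} \geq \frac{1}{n-1}\sum_{j\neq v}\log \Pr{X_{v,j}>(n-1)p},
\]
and bounding each $1$D factor below via log-concavity of the $1$D tail gives $\Pr{v\text{ iso}}\geq n^{-\alpha}$ with $\alpha=\alpha(\delta)<1$ once $c_1=c_1(\delta)$ is small enough, so $\E Z_0\geq n^{1-\alpha}$.

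For the variance, negative correlation applied to the disjoint index families $\{(u,j):j\neq u,v\}$ and $\{(v,j):j\neq v\}$ yields $\Pr{u,v\text{ both iso}}\leq \Pr{A_u}\Pr{v\text{ iso}}$ with $A_u:=\{X_{u,j}>p,\ j\neq u,v\}$, and $\Pr{A_u}\leq \Pr{u\text{ iso}}+\Pr{X_{u,v}\leq p}=\Pr{u\text{ iso}}+O(\log n/n)$ gives after summing $\Var(Z_0)=O((\log n)\E Z_0)$, so Chebyshev produces $\Pr{Z_0=0}=O(n^{-\delta})$ upon the appropriate choice of constants. The main obstacle is the first-moment lower bound: negative correlation supplies only an upper bound on $\Pr{v\text{ iso}}$, in the wrong direction, so log-concavity via Pr\'ekopa is indispensable. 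When some marginal has bounded support the Pr\'ekopa decomposition above can degenerate once $(n-1)p$ exceeds that support; in this regime one substitutes the Borell-type inequality $\Pr{Y>p\mathbf{1}}\geq \Pr{Y>s\mathbf{1}}^{p/s}$, a consequence of the log-concavity of $F$, for a suitable intermediate $s>p$, and uses the $1$D estimates (together with a union bound on the complement of $\{Y>s\mathbf{1}\}$) to keep the right-hand side nondegenerate.
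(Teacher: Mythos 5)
Your treatment of part (b) (union bound over vertex cuts, negative correlation iterated edge by edge to factor the ``no crossing edge'' event into a product of single-edge tails, plus the one-dimensional estimate $\Pr{X_{i,j}\le p}\ge c\,p/\sigma_{\max}$) is a legitimate direct argument; the paper simply quotes Theorem 2.1 of \cite{FVV} for this part. Likewise your second-moment bookkeeping in part (a) --- negative correlation applied to the two disjoint star families together with $\Pr{A}\le \Pr{A\cap B}+1-\Pr{B}$ and $\Pr{X_{u,v}\le p}\le p/\sigma_{\min}$ --- is exactly the computation in the paper.

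The genuine gap is the first-moment lower bound $\E Z_0\ge n^{1-\alpha}$. Your primary device, writing $p\mathbf{1}=\frac{1}{n-1}\sum_j (n-1)p\,e_j$ and using log-concavity of the joint survival function $F$, is irreparably lossy: it charges the entire isolation cost to a single coordinate at height $(n-1)p\asymp\sigma_{\min}\log n$, where the one-dimensional tail may be $0$ (bounded support, e.g.\ the uniform case) or as small as $e^{-c\log^2 n}$ (e.g.\ independent half-Gaussian marginals, which are log-concave, down-monotone and negatively correlated), so it cannot produce $n^{-\alpha}$ with $\alpha<1$. The proposed repair does not close this. The inequality $F(p\mathbf{1})\ge F(s\mathbf{1})^{p/s}$ does follow from log-concavity of $F$ and $F(0)=1$, but only when $s\ge p$ (for $s\le p$ log-concavity gives the \emph{reverse} inequality), and for $s\ge p$ your only tool for bounding $F(s\mathbf{1})$ from below is the union bound $F(s\mathbf{1})\ge 1-\sum_j\Pr{X_{v,j}\le s}\ge 1-C(n-1)s/\sigma_{\min}$, which is vacuous unless $s=O(\sigma_{\min}/n)$. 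In the regime that actually matters, $\sigma_{\min}/n\ll p<c_1\sigma_{\min}\log n/n$, no admissible $s$ exists; and in any case for $s>p$ the inequality adds little to the trivial monotonicity $F(p\mathbf{1})\ge F(s\mathbf{1})$, so you are back to lower-bounding the joint survival probability at a level where the union bound fails --- which is the original problem. This is precisely where the paper invokes Lemma 3.1 of \cite{FVV}: for a down-monotone log-concave vector and $p\le\sigma_{\min}/4$, $\Pr{\forall j\ X_{v,j}>p}\ge e^{-apn/\sigma_{\min}}$. That multivariate lower bound genuinely uses down-monotonicity and is not a formal consequence of log-concavity of $F$ along the diagonal combined with one-dimensional estimates; without it (or a proof of it), your argument does not establish $\E Z_0\ge n^{1-\alpha}$ over the full stated range of $p$, and part (a) remains unproved.
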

We will also discuss the existence of a giant component for smaller values of $p$. 

{\bf Notation:} Let
\begin{equation}\label{eq:def-M}
M = \max_{T}\sup_{y \in [0,\infty)^{|T|}}\max_{(i,j) \notin T} \E(X_{i,j}^2|X_T = y),
\end{equation}
where the first maximum is over all nonempty subsets $T$ of the index set $\{(i,j), \ 1 \leq i < j \leq n\}$ and we denote $X_T = (X_{i,j})_{(i,j) \in T}$. 

For our theorem on the existence of a giant component we need to have $M=O(1)$. For a GOB, $\set{x \in \R^{\binom{n}{2}}: \sum_{1\leq i<j\leq n} f_{i,j}(|x_{i,j}|) \leq 1}$, this is justified by the following assumption: we let
$$a_{i,j}=\sup\set{t>0:f_{i,j}(t)\leq 1}.$$
Now our assumptions on the $f_{i,j}$ imply that the $a_{i,j}$'s are finite. Furthermore, $M\leq \max_{i,j}a_{i,j}^2$ and so our assumption here is that $\max_{i,j}a_{i,j}$ is bounded by an absolute constant.
\begin{theorem}\label{th2}
Let $X = (X_{i,j})_{1\leq i < j \leq n}$ be a log-concave random vector in $[0,\infty)^{\binom{n}{2}}$ with a down-monotone density. Assume that 
$M=O(1)$.  There are constants $c_1$ and $c_2$ such that for every $\beta > 1$, we have
\begin{enumerate}[(i)]
\item\label{lm:giant-smallp}
If $p < \frac{c_1\sigma_{\min}}{n}$, then 
\[
\Pr{G_{X,p} \textrm{ has a component of order } \geq \beta\log n } < \frac{12}{n^{\beta-1}}.\]
\item\label{lm:giant-largep}
If $p > \frac{c_2M\log\left(\frac{M}{\sigma_{\min}}\right)}{n}$, then
\[
\Pr{G_{X,p} \textrm{ has a component of order } \in [\beta\log n,n/2] } < \frac{1}{n^{\beta-1}}
\]
and
\[
\Pr{G_{X,p} \textrm{ has a unique giant component of order } > n/2} > 1 - \frac{5\beta}{\log n}-\frac{1}{n^{\beta-1}}.
\]
\end{enumerate}
\end{theorem}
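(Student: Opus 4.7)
The plan rests on a single conditional density estimate. For any subset $T$ of the edge index set and any point $y \in [0,\infty)^{|T|}$, the Prekopa--Leindler inequality implies that the conditional density of $X_e$ given $X_T = y$ is log-concave, and it remains down-monotone because the joint density is. Standard one-dimensional bounds for log-concave down-monotone densities then supply absolute constants $c, C$ with
\[
\frac{c\,p}{\sqrt{\E(X_e^2 \mid X_T = y)}}\; \le\; \Pr{X_e \le p \mid X_T = y}\; \le\; \frac{C\,p}{\sqrt{\E(X_e^2 \mid X_T = y)}}
\]
in the regime $p \ll \sqrt{\E(X_e^2 \mid X_T = y)}$, and hence by the definition of $M$,
\[
\Pr{X_e > p \mid X_T = y}\; \le\; 1 - \frac{c\,p}{\sqrt{M}}.
\]
Both halves of the proof chain these bounds along BFS orderings using the chain rule of conditional probability.

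\textbf{Part (i).} If $G_{X,p}$ has a component of order $\ge k := \lceil \beta\log n\rceil$, it contains a spanning tree on some $k$-subset of its vertices; a union bound over $\binom{n}{k} k^{k-2}$ labelled trees reduces the task to bounding $\Pr{T \subseteq G_{X,p}} = \Pr{\bigcap_{e \in T}\{X_e \le p\}}$ for a fixed tree $T$. The subtle point here is that Theorem \ref{th2} drops the negative correlation hypothesis, and standard FKG-type inequalities for log-concave measures run the wrong way for decreasing events, so the factorisation does not fall out of iterated conditioning alone. I would instead estimate the joint density at the origin: down-monotonicity and log-concavity of the marginal $f_T$ give $\Pr{T \subseteq G_{X,p}} \le p^{k-1} f_T(0)$, and a Hensley-type bound relating the density at the mode of a log-concave measure to its second-moment structure yields $f_T(0) \le (C/\sigma_{\min})^{k-1}$. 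Combined with Stirling and $p < c_1 \sigma_{\min}/n$, this turns the union bound into a geometric series in $k$ with ratio $Cc_1 < 1$ for $c_1$ small, producing the claimed $12/n^{\beta-1}$.

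\textbf{Part (ii), first half.} A component of order $k \in [\beta\log n, n/2]$ corresponds to a cut $(S, V \setminus S)$ with $|S| = k \le n/2$ and no crossing edges. Iterating the second conditional estimate along the $k(n-k)$ cut edges via the chain rule (this direction works, since the estimate is a uniform upper bound on the conditional probability regardless of the second moment) gives
\[
\Pr{\text{no crossing edge}} \;\le\; \left(1 - \tfrac{cp}{\sqrt{M}}\right)^{k(n-k)} \;\le\; \exp\!\left(-\tfrac{c\,k(n-k)\,p}{\sqrt{M}}\right).
\]
Multiplying by $\binom{n}{k} \le (en/k)^k$, using $n-k \ge n/2$, and plugging in $p > c_2 M\log(M/\sigma_{\min})/n$ with $c_2$ large (the factor $\log(M/\sigma_{\min})$ absorbs the entropy $\log(en/k)$ in the summable regime) makes the total at most $1/n^{\beta-1}$.

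\textbf{Part (ii), second half.} Uniqueness of a component of order $>n/2$ is automatic. For existence I bound $\Pr{v \in \text{comp of order} \le \beta\log n}$ by summing over sizes $k \le \beta\log n$ and sets $S\ni v$ of size $k$ the probability that $S$ is exactly the component of $v$; this event is the joint of a spanning-tree event on $S$ (handled as in Part (i)) and a no-cut-crossing event (handled as in the first half of Part (ii)), which compose correctly under conditioning because the joint density is log-concave and down-monotone. Summing yields $\Pr{v \in \text{small component}} = O(\beta/\log n)$, and Markov's inequality gives that the number of vertices lying in small components exceeds $n/2$ with probability at most $O(\beta/\log n)$; combining with the first half of Part (ii) yields the claimed $5\beta/\log n + n^{-(\beta-1)}$. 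The main obstacle throughout is the joint density estimate in Part (i), since the weakening of hypotheses from Theorem \ref{th1} to Theorem \ref{th2} removes negative correlation and FKG runs backwards.
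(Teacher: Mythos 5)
Your part (i) and your conditional estimate are in the spirit of the paper's key lemma (the paper conditions on the tree edges, applies a uniform bound $e^{-ap|S|/M}$ to the cut edges, and then invokes Lemma 3.2 of \cite{FVV}, which is exactly your ``Hensley-type'' bound $\Pr{\forall t\in T\ X_t\le p}\le (bp/\sigma_{\min})^{|T|}$; that multivariate density estimate is not an off-the-shelf one-dimensional fact, but since it is the cited ingredient, part (i) is essentially sound). The problems are in part (ii). For the first half you use only the cut: $\binom{n}{k}\exp(-cpk(n-k)/\sqrt{M})$. With $p>c_2M\log(M/\sigma_{\min})/n$ the exponent is only of order $k\log(M/\sigma_{\min})$, and nothing in the hypotheses makes $M/\sigma_{\min}$ large: in the generic case $M/\sigma_{\min}=O(1)$ (e.g.\ a product-like body), so the exponential is $e^{-O(k)}$ and cannot absorb the entropy $k\log(en/k)\approx k\log n$ when $k\in[\beta\log n,n/2]$. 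You must also charge for the internal structure, i.e.\ multiply by $k^{k-2}(bp/\sigma_{\min})^{k-1}\approx(C/n)^{k-1}$ for a spanning tree, as in the paper's $\E Z_k$ bound; the tree factor cancels $\binom{n}{k}$, and the cut factor (this is where $\log(M/\sigma_{\min})$ enters) kills the leftover $\bigl(CM\log(M/\sigma_{\min})/\sigma_{\min}\bigr)^k$.

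The second half has a more serious gap. Your claim $\Pr{v\in\text{component of order}\le\beta\log n}=O(\beta/\log n)$ is false in this regime: since $M/\sigma_{\min}$ may be $O(1)$, the hypothesis only forces $pn/M=O(1)$, and then the probability that a fixed vertex is isolated is bounded below by a constant (by the same FVV-type lower bound used in Theorem \ref{th1}), so the expected number of vertices in small components is $\Theta(n)$. Consequently Markov's inequality applied to that count can only give a \emph{constant} failure probability, never the claimed $O(\beta/\log n)$. This is exactly why the paper does not argue per vertex: it bounds the upper tail of each $Z_k$, $k\le\beta\log n$, by the method of factorial moments, applying the in/out lemma to $l$-tuples of disjoint components with $l$ as large as about $e^{-1}\sqrt{n}$ for $k<\tfrac12\log n$ (and $l=1$ for larger $k$), to get $\Pr{Z_k\ge ne^{-k}}\le O(1/(\log n)^2)$ uniformly, hence $Z_k<ne^{-k}$ for all $k\le\beta\log n$ simultaneously with probability $1-5\beta/\log n$; on that event $\sum_{k\le\beta\log n}kZ_k<0.93n$ deterministically, which together with the first half forces a (necessarily unique) component of order $>n/2$. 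Without some such concentration-beyond-the-mean input, your route cannot reach the stated $1-5\beta/\log n-n^{-(\beta-1)}$ bound.
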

Note that we have dropped the assumption of negative correlation.
\section{Connectivity: Proof of Theorem \ref{th1}}
\begin{proof}
Part (b) is part of Theorem 2.1 of \cite{FVV}. For (a), we adapt the standard second moment argument used for the Erd\"os-R\'enyi model. For $1 \leq i \leq n$, let $Y_i$ be equal to $1$ if the vertex $i$ is isolated and $0$ otherwise. Let $Y = Y_1 + \ldots + Y_n$ be the number of isolated vertices. We have,
\[
\Pr{G_{X,p} \textrm{ has isolated vertices}}=  \Pr{Y  > 0} \geq \frac{(\E Y)^2}{\E Y^2}.
\]
Thus, if we show that $\E Y^2 \leq (1+\varepsilon)(\E Y)^2$, then $\Pr{Y > 0} \geq 1 - \varepsilon$. Clearly,
\[
\E Y^2 = \sum_{k} \E Y_k^2 + \sum_{k \neq l} \E Y_k Y_l = \sum_k \E Y_k + \sum_{k \neq l} \Pr{Y_k = 1 = Y_l} = \E Y + \sum_{k \neq l} \Pr{Y_k = 1 = Y_l}
\]
and our goal is to show that
\[
\E Y \leq \frac{\varepsilon}{2}(\E Y)^2 \qquad \text{and} \qquad \sum_{k \neq l} \Pr{Y_k = 1 = Y_l} \leq \left(1+\frac{\varepsilon}{2}\right)(\E Y)^2.
\]
From the negative correlation of coordinates of $X$ as well as an elementary inequality $\Pr{A} \leq \Pr{A\cap B} + 1 - \Pr{B}$, we get
\begin{align*}
\Pr{Y_k=1=Y_l} &= \Pr{\forall i \neq k \ X_{ik} > p, X_{il} > p, X_{kl} > p} \\
&\leq \Pr{\forall i \neq k \ X_{ik} > p}\Pr{\forall i\neq k,l \ X_{il} > p} \\
&\leq \Pr{\forall i \neq k \ X_{ik} > p}\big[ \Pr{\forall i\neq l \ X_{il} > p} + 1 - \Pr{X_{kl} > p} \big] \\
&= \Pr{Y_k = 1}\big[ \Pr{Y_l = 1} + \Pr{X_{kl} \leq p} \big].
\end{align*}  
By Lemma 3.5 from \cite{FVV}, $\Pr{X_{kl} \leq p} \leq \frac{p}{\sigma_{\min}}$ (recall that by the Pr\'ekopa-Leindler inequality, marginals of log-concave vectors are log-concave; clearly, marginals of down-monotone densities are down-monotone). Therefore,
\begin{align*}
\sum_{k \neq l} \Pr{Y_k = 1 = Y_l} &\leq \sum_{k \neq l} \Pr{Y_k = 1}\Pr{Y_l =1} + \sum_{k\neq l} \Pr{Y_k = 1}\frac{p}{\sigma_{\min}} \\
&\leq \left(\sum_{k} \Pr{Y_k=1}\right)^2 + \frac{np}{\sigma_{\min}}\sum_{k} \Pr{Y_k = 1} \\
&\leq \left(1 + \frac{np}{\sigma_{\min}\E Y}\right)(\E Y)^2  < \left(1 + \frac{c_1\log n}{\E Y}\right)(\E Y)^2,
\end{align*}
so it suffices to take $\varepsilon$ such that
\[
\varepsilon \geq \frac{2}{\E Y} \qquad \text{and} \qquad \varepsilon \geq \frac{2c_1\log n}{\E Y}.
\]
By Lemma 3.1 from \cite{FVV}, $\Pr{Y_k = 1} \geq e^{-a p n /\sigma_{\min} }$, for some universal constant $a$ (the assumption $p < \frac{1}{4}\sigma_{\min}$ of that lemma is clearly satisfied if $p < c_1\sigma_{\min}\frac{\log n}{n}$), so
\[
\E Y = \sum_k \Pr{Y_k = 1} \geq ne^{-a p n /\sigma_{\min} } > n^{1-ac_1}.
\]
Thus, $\varepsilon = c_2n^{ac_1-1}\log n$ will suffice.
\end{proof}

\section{Giant Component: Proof of Theorem \ref{th2}}
\begin{lemma}\label{lm:inout-edge-uppbd}
Let $X = (X_{i,j})_{1\leq i < j \leq n}$ be a log-concave random vector in $[0,\infty)^{\binom{n}{2}}$ with a down-monotone density. There are universal constants $a$ and $b$ such that for $S, T \subset \{(i,j), \ 1 \leq i < j \leq n\}$ and $p > 0$, we have
\[
\Pr{\forall s \in S \ X_s > p, \forall t \in T \ X_t \leq p} \leq e^{-ap|S|/M}\left(\frac{bp}{\sigma_{\min}}\right)^{|T|}.
\]
\end{lemma}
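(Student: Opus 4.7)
The plan is to peel off the edges in $S\cup T$ one at a time by iterated conditioning on the coordinate values of $X$, exploiting that the conditional density of any coordinate given fixed values of the remaining ones is again log-concave and down-monotone on $[0,\infty)$ (log-concavity by Pr\'ekopa--Leindler, down-monotonicity directly from the definition). Each factor in the resulting telescoping product then reduces to a one-dimensional estimate to which the tools of \cite{FVV} apply.

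Concretely, I would write
\[
\Pr{\forall s \in S\ X_s>p,\ \forall t\in T\ X_t\leq p}
 = \int_{[0,p]^{T}} f_T(y)\,\Pr{\forall s \in S\ X_s>p \mid X_T=y}\,dy,
\]
and iterate on $S$ inside the integrand. For each $s_k\in S$, given fixed values of $X_T$ and $X_{s_1},\ldots,X_{s_{k-1}}$, the conditional density of $X_{s_k}$ is log-concave and nonincreasing on $[0,\infty)$ with second moment at most $M$ by \eqref{eq:def-M}. A one-dimensional estimate $\Pr{X>p}\leq e^{-ap/M}$ for such a density (obtained from the peak lower bound $g(0)\geq c_0/\sqrt{\E X^2}$ together with the fact that a log-concave nonincreasing density stays above $g(0)/e$ on an initial segment of length of order $1/g(0)$, and absorbing $\sqrt{M}$ into $M$ by relabelling constants) then applies uniformly in the conditioning values. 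Multiplying the $|S|$ resulting factors yields $\Pr{\forall s\in S\ X_s>p\mid X_T=y}\leq e^{-ap|S|/M}$ uniformly in $y\in[0,p]^T$, which is the first factor of the claimed bound.

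It remains to show $\int_{[0,p]^T} f_T(y)\,dy \leq (bp/\sigma_{\min})^{|T|}$. The plan is the same: iterate sequential conditioning on the edges of $T$ and invoke Lemma~3.5 of \cite{FVV} at each step. The difficulty, which I expect to be the main obstacle, is that Lemma~3.5 gives $\Pr{X\leq p}\leq bp/\sqrt{\E X^2}$, whereas the conditional second moment of $X_{t_j}$ after conditioning on $X_{t_{<j}}$ is not a priori bounded below by $\sigma_{\min}^2$. The way around this is to bypass the conditional second moment: using down-monotonicity,
\[
\int_0^p f_{t_j\mid t_{<j}}(y\mid y_{<j})\,dy \;\leq\; p\cdot f_{t_j\mid t_{<j}}(0\mid y_{<j}),
\]
and the peak $f_{t_j\mid t_{<j}}(0\mid y_{<j})$ is controlled by the unconditional one-dimensional peak $f_{t_j}(0)\leq C/\sigma_{\min}$ via a monotonicity argument that uses down-monotonicity and log-concavity of the joint. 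Iterating over $T$ produces the factor $(bp/\sigma_{\min})^{|T|}$ and, combined with the previous step, gives the claimed inequality.
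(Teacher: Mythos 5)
Your decomposition of the event and your treatment of the $S$-part are sound and close in spirit to the paper: the paper also conditions on $X_T=y$, but it applies the multivariate Lemma 3.1 of \cite{FVV} to the conditional vector $X_S$ in one shot (using that all conditional second moments are at most $M$), whereas you peel off the coordinates of $S$ one at a time with a one-dimensional tail estimate; that variant is fine, up to the cosmetic caveat that trading $\sqrt{M}$ for $M$ in the exponent is only a relabelling of constants when $M$ is bounded below.

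The genuine gap is in the $T$-part. The paper never needs any conditional information about $X_T$: after pulling out the uniform bound $e^{-ap|S|/M}$, it is left with $\Pr{\forall t\in T\ X_t\leq p}$ for the \emph{unconditional} marginal $X_T$, which is log-concave and down-monotone with coordinate second moments at least $\sigma_{\min}^2$, and this probability is bounded by $(bp/\sigma_{\min})^{|T|}$ directly by Lemma 3.2 of \cite{FVV}. You instead try to reprove this product bound by sequential conditioning on exact values, and the pivotal step --- that the conditional peak $f_{t_j\mid t_{<j}}(0\mid y_{<j})$ is controlled by the unconditional peak ``via a monotonicity argument'' --- is not justified and is false as a monotonicity statement: for the uniform density on the triangle $\{x_1,x_2\geq 0,\ x_1+x_2\leq 1\}$ one has $f_{2\mid 1}(0\mid y_1)=(1-y_1)^{-1}$, which \emph{increases} in the conditioning value and is unbounded, so conditional peaks are not dominated by unconditional ones for down-monotone log-concave densities. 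What you would actually need is the quantitative claim that the conditional peak stays of order $1/\sigma_{\min}$ uniformly over conditioning values in $[0,p]^{j-1}$ (and over up to $|T|-1$ conditioned coordinates), which is essentially the content of Lemma 3.2 of \cite{FVV} itself and is exactly what your sketch does not supply; your own observation that conditional second moments need not stay above $\sigma_{\min}^2$ is the correct diagnosis, but the proposed bypass does not close the hole. The simplest repair is to do what the paper does: keep $T$ unconditioned and invoke the multivariate lower-tail product bound of \cite{FVV} directly.
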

\begin{proof}
Fix disjoint sets $S, T \subset \{(i,j), \ 1 \leq i < j \leq n\}$ (if they are not disjoint, the probability in question is $0$) and $y \in [0,\infty)^{|T|}$. Let $f$ be the density of $(X_S,X_T)$. The conditional density of the vector $X_S$ given $X_T = y$,
\[
f_{X_S|X_T}(x|y) = \frac{f(x,y)}{\int f(x',y)\dd x'}
\]
is down-monotone and log-concave. Therefore, by Lemma 3.1 from \cite{FVV},
\[
\Pr{\forall s \in S \ X_s > p| X_T=y} \leq e^{-ap|S|/M}.
\]
We denote the density of $X_T$ by $f_{X_T}$ and get
\begin{align*}
\Pr{\forall s \in S \ X_s > p, \forall t \in T \ X_t \leq p} &= \int_{[0,p]^{|T|}} \Pr{\forall s \in S \ X_s > p | X_T = y} f_{X_T}(y) \dd y \\
&\leq \int_{[0,p]^{|T|}}e^{-ap|S|/M} f_{X_T}(y) \dd y \\
&= e^{-ap|S|/M}\Pr{\forall t \in T \ X_t \leq p}\\
&\leq e^{-ap|S|/M}\left(\frac{bp}{\sigma_{\min}}\right)^{|T|},
\end{align*}
where the final inequality follows directly from Lemma 3.2 of \cite{FVV}.
\end{proof}

With this lemma in hand, we can prove Theorem \ref{th2}.
\begin{proof}
Let $Z_k$ be the number of components of order $k$ (that is, on $k$ vertices) in $G_{X,p}$. As for the Erd\"os-R\'enyi model, looking at a spanning tree for each component and bounding the corresponding in-out edge probabilities using Lemma \ref{lm:inout-edge-uppbd} yields
\begin{align*}
\E Z_k &\leq \binom{n}{k}k^{k-2}e^{-apk(n-k)/M}\left(\frac{bp}{\sigma_{\min}}\right)^{k-1} \\
&\leq \left(\frac{en}{k}\right)^kk^{k-2}e^{-apk(n-k)/M}\left(\frac{bp}{\sigma_{\min}}\right)^{k-1} \\
&=\frac{\sigma_{\min}}{bp}\frac{1}{k^2}\left[\frac{eb}{\sigma_{\min}}pne^{-\frac{ap}{M}(n-k)}\right]^k.
\end{align*}
If $p = \frac{M}{a}\frac{c}{n}$, with $c$ being a constant (chosen soon), this becomes
\[
\E Z_k \leq \frac{e}{A}\frac{1}{c}\frac{n}{k^2}\left[Ace^{-c}e^{ck/n}\right]^k,
\]
where we put $A = \frac{eb}{a}\frac{M}{\sigma_{\min}}$.

\bigskip
\emph{Case 1.}
If $c$ is a small constant, say $c \leq \frac{1}{eA}$ (equivalently, $p \leq \frac{\sigma_{\min}}{e^2b}\frac{1}{n}$), then we bound $e^{-c}e^{ck/n}$ crudely by $1$ and get that
\[
\E Z_k \leq \frac{e}{A}\frac{1}{c}\frac{n}{k^2}(Ac)^{k} \leq en(Ac)^{k-1} \leq e^2ne^{-k}.
\]
Thus,
\[
\E \left(\sum_{k \geq \beta \log n} Z_k \right) \leq e^2n\cdot \sum_{k \geq \beta \log n}e^{-k} \leq e^2ne^{-\beta\log n}\frac{1}{1 - e^{-1}} = \frac{e^3}{e-1}\frac{1}{n^{\beta-1}} < \frac{12}{n^{\beta-1}}.
\]
By the first moment method, this gives \eqref{lm:giant-smallp}.

\bigskip\noindent
\emph{Case 2.}
Let $c$ be a large constant, say such that $Ace^{-c/2} \leq \frac{1}{e}$ and $Ac \geq e^2$, which holds when, say $c \geq 4\log A$, provided that $A$ is large enough, which leads to the assumption on $p$ in \eqref{lm:giant-largep}. Then for $k \leq n/2$, we have
\[
\E Z_k \leq \frac{en}{Ac} (Ace^{-c/2})^k \leq ne^{-k-1}.
\]
Thus,
\[
\E \left(\sum_{\beta \log n \leq k \leq n/2 } Z_k \right) \leq ne^{-1}\sum_{k \geq \beta \log n} e^{-k} \leq \frac{1}{e-1}\frac{1}{n^{\beta-1}} < \frac{1}{n^{\beta-1}}.
\]
By the first moment method, this gives the first part of \eqref{lm:giant-largep}.

To go about the second part and show that there is a giant component, we shall simply count the number of vertices on the small components and show that with high probability, there are strictly less $n$ such vertices. The uniqueness of a giant component plainly follows from the fact that it has more than $n/2$ vertices, so there cannot be more than one such components. Fix $1 \leq k \leq \beta\log n$ and set $t = ne^{-k-1}$. For any positive integer $l \leq et+1$, we have
\begin{align*}
\Pr{Z_k \geq et} &\leq \Pr{Z_k(Z_k-1)\ldots(Z_k-l+1) \geq et(et-1)\ldots(et-l+1) } \\
&\leq \frac{\E Z_k(Z_k-1)\ldots(Z_k-l+1)}{et(et-1)\ldots(et-l+1)} \\
&\leq
\frac{\E Z_k(Z_k-1)\ldots(Z_k-l+1)}{(et-l+1)^l}.
\end{align*}
As for the upper bound for $\E Z_k$, looking at spanning trees for each $l$-tuple of distinct components of order $k$ and bounding the corresponding in-out edge probabilities using Lemma \ref{lm:inout-edge-uppbd} yields
\begin{align*}
\E Z_k&(Z_k-1)\ldots(Z_k-l+1) \\
&\leq \binom{n}{k}\binom{n-k}{k}\ldots\binom{n-(m-1)k}{k}(k^{k-2})^le^{-\frac{ap}{M}kl(n-kl)}\left(\frac{bp}{\sigma_{\min}}\right)^{(k-1)l} \\
&\leq \left(\frac{en}{k}\right)^{kl}(k^{k-2})^le^{-\frac{ap}{M}kl(n-kl)}\left(\frac{bp}{\sigma_{\min}}\right)^{(k-1)l} \\
&= \left(\frac{e}{A}\frac{1}{c}\frac{n}{k^2}\left[Ace^{-c}e^{ckl/n}\right]^k\right)^l.
\end{align*}
Provided that $kl \leq n/2$, under our assumption $c\geq 4\log A$, this is further upper bounded by $(t/k^2)^l$, which gives
\[
\Pr{Z_k \geq ne^{-k}} = \Pr{Z_k \geq et} \leq \frac{1}{k^{2l}}\left(\frac{t}{et-l+1}\right)^l.
\]
For $k \geq \frac{1}{2}\log n$, we choose $l = 1$ and get
\[
\Pr{Z_k \geq ne^{-k}} \leq \frac{1}{e}\frac{1}{(\frac{1}{2}\log n)^2}, \qquad k \geq \frac{1}{2}\log n.
\]
For $k < \frac{1}{2}\log n$, we have $t = ne^{-k-1} > e^{-1}\sqrt{n}$, so choosing, say $l - 1 = \lfloor e^{-1}\sqrt{n} \rfloor$ yields
\begin{align*}
\Pr{Z_k \geq ne^{-k}} \leq \left(\frac{t}{et-\lfloor e^{-1}\sqrt{n} \rfloor}\right)^l = \left(\frac{1}{e-\frac{\lfloor e^{-1}\sqrt{n} \rfloor}{t}}\right)^l &\leq \left(\frac{1}{e-1}\right)^{l} \\
&\leq \left(\frac{1}{e-1}\right)^{e^{-1}\sqrt{n}}, \qquad k < \frac{1}{2}\log n.
\end{align*}
Combining the last two estimates, the union bound gives that the probability of the event $E = \{\exists k \leq \beta\log n, \ Z_k \geq ne^{-k}\}$ is at most
\[
\frac{4}{e}\frac{(\beta-1/2)\log n+1}{(\log n)^2} + \frac{\frac{1}{2}\log n}{(e-1)^{e^{-1}\sqrt{n}}} < \frac{5\beta}{\log n}
\]
(we check that $ \frac{\frac{1}{2}\log n}{(e-1)^{e^{-1}\sqrt{n}}} < \frac{2}{\log n}$ and simply bound $\frac{4}{e}\frac{(\beta-1/2)\log n+1}{(\log n)^2} \leq \frac{\frac{4}{e}\beta+\frac{2}{e}}{\log n}$). To finish, it remains to check that on $E^c$, there are few vertices on the small components. On $E^c$, we have
\[
\sum_{k \leq \beta \log n} kZ_k \leq n\sum_{k \leq \beta \log n} ke^{-k} < n \sum_{k=1}^\infty ke^{-k} =n\frac{e}{(e-1)^2} < 0.93n.
\]
\end{proof}

\begin{remark}\label{rem:moregen}
It was shown in \cite{Woj} that the negative correlation property holds in fact for random vectors with densities of the form $h(\sum f_i(x_i))$, where $h:[0,\infty)\to[0,\infty)$ is a nonincreasing log-concave function ($h = \1_{[0,1]}$ giving uniform densities on GOBs). For such densities, $M$ is finite and can be bounded as for GOBs in terms of certain parameters depending on the functions $f_i$ and $h$.
\end{remark}

\section{Conclusion and Open Questions}
We have successfully generalised the results on the regular simplex in \cite{FVV} to GOBs. The following questions seem most apposite.
\begin{enumerate}[{\bf Q1}]
\item What we prove in Theorem \ref{th2} does not rule out the possibility that in some range of $p$ there is more than one giant component. Can the proof be tightened to rule this out?
\item What is the connectivity or giant component threshold for the intersection of two well-behaved regular simplices?
\item What is the connectivity or giant component threshold for the intersection of a {\em few} regular simplices with independent randomly chosen coefficients?
\end{enumerate}

\end{document}